\numberwithin{equation}{section}
\newtheorem{theorem}{Theorem}[section]
\newtheorem{cor}[theorem]{Corollary}
\theoremstyle{definition}
\newtheorem{example}[theorem]{Example}
\def\<{{\langle}}
\def\>{{\rangle}}
\def\G{{\Gamma}}
\def\a{{\alpha}}
\def\b{{\beta}}
\def\d{{\delta}}
\def\Z{\mathbb Z}
\def\a{\alpha}
\def\b{\beta}
\def\G{\Gamma}
\def\s{\sigma}
\def\e{\epsilon}
\def\De{{\Delta}}
\def\ni{\noindent} 
\begin{document}

\title{Knot Invariants from Laplacian Matrices}

\author{Daniel S. Silver
\and Susan G. Williams\thanks {The authors are partially supported by the Simons Foundation.} }

\maketitle 


\begin{abstract} 

A checkerboard graph of a special diagram of an oriented link is  made a directed, edge-weighted graph in a natural way so that a principal minor of its Laplacian matrix is a Seifert matrix of the link. Doubling and weighting the edges of the graph produces a second Laplacian matrix such that a principal minor is an Alexander matrix of the link. The Goeritz matrix and signature invariants are obtained in a similar way.  A device introduced by L. Kauffman makes it possible to apply the method to general diagrams. \end{abstract} 
 \bigskip

\begin{center} MSC 2010: 57M25, 57M15 \end{center}

\section{Seifert matrices} \label{Intro} Classical link invariants are often computed from a Seifert matrix associated to a \textit{Seifert surface}, an orientable surface $\Sigma$ with boundary equal to the link $\ell$. One begins with oriented curves $a_1, \ldots, a_n$ in $\Sigma$ that represent a  basis for the homology group $H_1(\Sigma; \Z)$. 
A \textit{Seifert matrix} $V = (V_{i,j})$ is defined as the $n \times n$ matrix 
with $V_{i,j}$ equal to the linking number ${\rm Lk}(a_i, a_j^+)$, where $a_j^+$ is a copy of $a_j$ pushed off of $\Sigma$ in the direction of a positive normal vector. 

One of the invariants of $\ell$ that can be obtained from $V$ is the \textit{Goeritz matrix} $G= V+V^T$ with $V^T$ the transpose of $V$, provided that $V$ arises from a ``special Seifert surface" (see below). It is well defined up to transformations $G \mapsto RGR^T$ with unimodular $R$, and elementary enlargements (or their inverses) $G \mapsto {\rm diag}(G, \pm 1)$. Another is the single-variable \textit{Alexander polynomial} $\Delta_\ell(x)= {\rm Det}(V - x V^T)$, well defined up to multiplication by $\pm x^i, i \in \Z$. Finally, if $\omega$ is any unit-modulus complex number, $\omega \ne 1$, then the signature of $(1-\omega)V+(1-\overline \omega) V^T$ is the \textit{$\omega$-signature $\sigma_\omega(\ell)$}. Details about each of these invariants can be found in \cite{Li97}. \medskip

The purpose of this note is to show how Seifert matrices and and invariants derived from them arise naturally from Laplacian matrices of directed graphs associated to link diagrams. The ideas here motivated \cite{STW18} 
in which the Seifert matrix and Goeritz matrix of a link are recovered from 
a modified Dehn presentation of the link group.  \medskip

The \textit{Laplacian matrix} $L(\G)$ of a directed graph $\G$ is the square matrix indexed by the vertex set of $\G$ with diagonal entries $L(\G)_{i,i}$ equal to the \textit{weighted out-degree} of the vertex $v_i$, the sum of weights of edges with initial vertex $v_i$, and  non-diagonal entries $L(\G)_{i,j}$ equal to $-1$ times the sum of the weights of edges from $v_i$ to $v_j$.

A \textit{diagram} $D$ of a link $\ell$ is a generic projection of the link in the plane, a regular 4-valent graph, with extra information at each vertex indicating which arc of the link passes over the other. We assume throughout that $D$ has no nugatory crossings, and it cannot be separated by any circle in the plane. A standard operation of ``smoothing"  crossings of $D$, as in Figure \ref{smoothing}, results in a collection of Seifert disks, possibly nested, in the plane that can be rejoined by half-twisted bands of $D$ to produce a Seifert surface $\Sigma$ for $\ell$. The diagram $D$ is \textit{special} if the disks are not nested.

\begin{figure}
\begin{center}
\includegraphics[height=1 in]{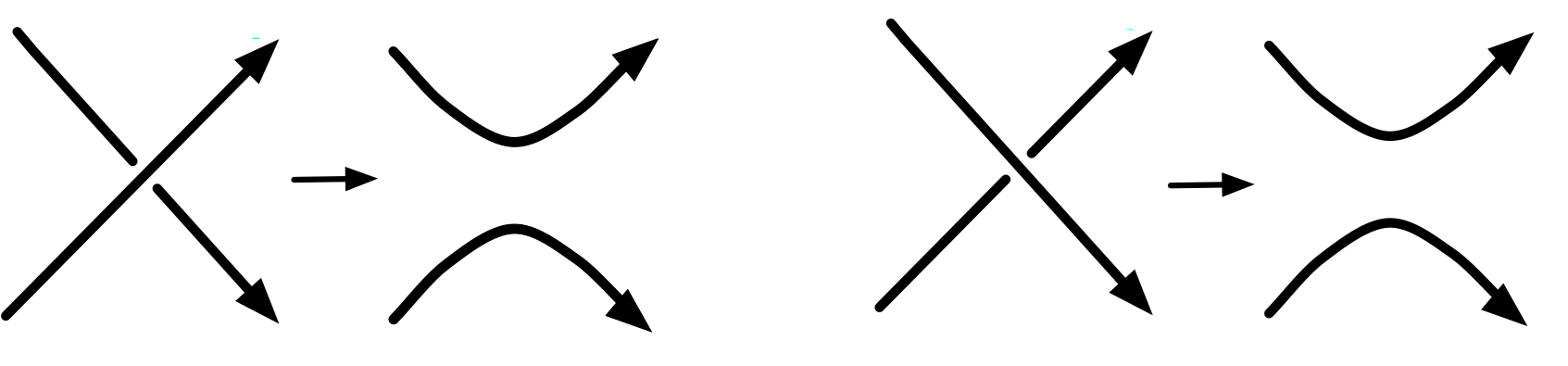}
\caption{Smoothing a crossing of a diagram $D$.}
\label{smoothing}
\end{center}
\end{figure}

Any diagram $D$ can be deformed to a special diagram by lifting nested Seifert disks and pulling them across bands (see \cite{BZ03}, page 229). While conceptually simple, the process is often messy. Alternatively, a method of L. Kauffman (\cite{Ka87}, page 185) enables us to add unknotted, unlinked components to $D$ and thereby create a special diagram. The method is reviewed in Section \ref{tracer}. 

By a \textit{region} of a link diagram $D$ we mean a face of the underlying projection of the link. The diagram can be checkerboard shaded so that every edge separates a shaded region from an unshaded one. For the sake of definiteness we adopt the convention that the unbounded region is unshaded. The diagram is special if and only if its shaded regions form a Seifert surface of the link.

It is common to consider the \textit{shaded checkerboard graph} of a plane graph $D$. It has vertices corresponding to the shaded regions of $D$, and an edge between two vertices for every crossing at which the corresponding regions meet. The diagram is special if and only if the graph is bipartite. 

Here we consider the dual graph $\G= \G(D)$, the \textit{unshaded checkerboard graph} of $D$, with vertices corresponding to the unshaded regions of $D$. The vertex corresponding to the unbounded region will be denoted by $v_\infty$. As before, there is an edge joining two vertices for every crossing $c$ where the corresponding regions meet. Direct this edge and label it with the \textit{weight} $w_e$ equal to $\theta(c)=\pm 1$, as in Figure \ref{directed}. 

\begin{figure}
\begin{center}
\includegraphics[height=1.6 in]{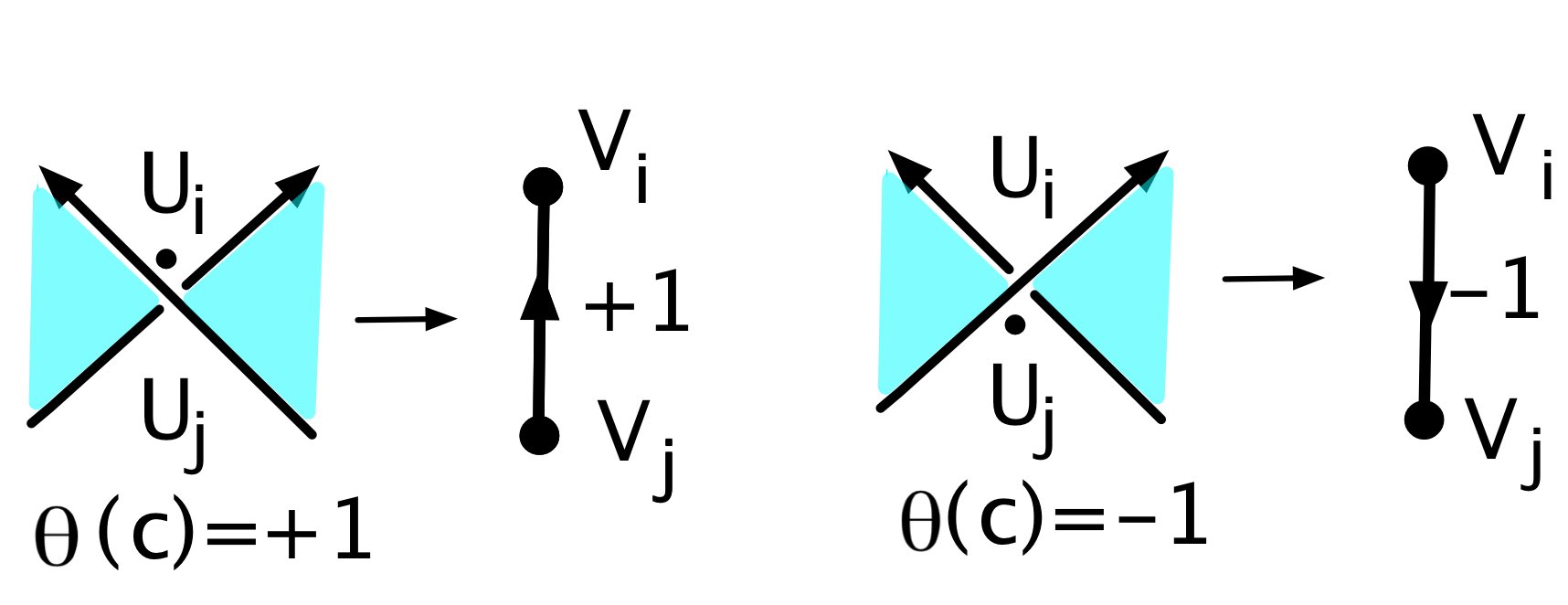}
\caption{Constructing the graph $\G(D)$ from a special diagram $D$.}
\label{directed}
\end{center}
\end{figure}

Define $\G^{\a, \b}$ to be the graph obtained from $\G$ by doubling and weighting its edges in the following way. For every edge $e$ of $\G$ from $v_i$ to $v_j$, replace the weight $w_e$ by $w_e \a$; add a new edge $\bar e$ from $v_j$ to $v_i$ with weight $w_{\bar e} = w_e \b$.

We denote by $L_{v_\infty}(\G)$ (resp. $L_{v_\infty}(\G^{\a, \b})$)  the principal minor of $L(\G)$ (resp. $L(\G^{\a, \b})$) gotten by deleting
the row and column corresponding to the vertex $v_\infty$.

\begin{theorem}\label{main} Assume that $D$ is a special diagram of an oriented link $\ell$ with unshaded checkerboard graph $\G$. Then 
\begin{enumerate}[(i)]
\item $L_{v_\infty}(\G)$ is a Seifert matrix $V$ for $\ell$; 
\item $L_{v_\infty} (\G^{1, 1})$ is a Goeritz matrix $G$ for $\ell$. 
\item $L_{v_\infty} (\G^{1, -x})$ is an Alexander matrix $A$ for $\ell$.  
\end{enumerate}
\end{theorem}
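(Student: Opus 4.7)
The plan is to establish (i) by directly computing Seifert matrix entries in a basis of $H_1(\Sigma;\Z)$ naturally indexed by the bounded unshaded regions of $D$, and then to deduce (ii) and (iii) as short algebraic consequences of the doubling construction. I would first construct $\Sigma$ in the standard way --- smooth each crossing, take the un-nested shaded disks (un-nested because $D$ is special), and reglue with half-twisted bands --- and then choose as basis $a_1,\ldots,a_n$ of $H_1(\Sigma;\Z)$ the loops where $a_i$ traces the inner boundary of the bounded unshaded region $R_i$, crossing each band on $\partial R_i$ exactly once. Standard facts for special Seifert surfaces confirm that this is a basis and that $n$ equals the number of bounded unshaded regions, which matches the size of $L_{v_\infty}(\G)$.

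The heart of the argument is the computation of $V_{i,j}={\rm Lk}(a_i,a_j^+)$. For $i\ne j$, only crossings $c$ where both $R_i$ and $R_j$ appear can contribute, and a local check at such a crossing --- using the link orientation and the convention of Figure \ref{directed} for the sign $\theta(c)=\pm 1$ and the edge direction --- shows that the signed contribution equals $-w_e$, where $e$ is the resulting directed edge of $\G$. Summing over crossings yields $V_{i,j}=-\sum_{e:v_i\to v_j}w_e=L(\G)_{i,j}$. For the diagonal, ${\rm Lk}(a_i,a_i^+)$ is the framing of a collar of $a_i$ in $\Sigma$, equal to a signed count of half-twists in the bands meeting $R_i$; the conventions of Figure \ref{directed} are arranged so that this equals the weighted out-degree at $v_i$. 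An auxiliary ``Kirchhoff'' observation must be verified here: at every vertex of $\G$ the sum of outgoing weights equals the sum of incoming weights --- this follows from the link orientation --- so that each separately equals $V_{i,i}$.

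Given (i), parts (ii) and (iii) drop out algebraically. By construction the off-diagonal $(i,j)$ entry of $L(\G^{\a,\b})$ is $\a L(\G)_{i,j}+\b L(\G)_{j,i}$, and the diagonal at $v_i$ is $\a$ times the out-sum plus $\b$ times the in-sum at $v_i$. Taking $(\a,\b)=(1,1)$ and using the Kirchhoff observation gives $L_{v_\infty}(\G^{1,1})=V+V^T=G$, while $(\a,\b)=(1,-x)$ yields $L_{v_\infty}(\G^{1,-x})=V-xV^T$, which is an Alexander matrix of $\ell$. The main obstacle is therefore the sign bookkeeping in the local crossing computation for (i): one must pin down the exact correspondence between the direction of the edge of $\G$ at $c$ and which of $V_{i,j}$ or $V_{j,i}$ receives the contribution $\pm\theta(c)$, and separately verify that the framing identification is compatible with the weighted out-degree. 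Once this case analysis is completed for the two crossing types in Figure \ref{directed}, the theorem follows with no further work.
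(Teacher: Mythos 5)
Your proposal follows essentially the same route as the paper: take the boundaries of the bounded unshaded regions as a basis for $H_1(\Sigma;\Z)$, match the off-diagonal Seifert entries to the Laplacian entries by a local sign check at each crossing, use the observation that weighted in-degree equals weighted out-degree at every vertex (the paper proves this via the Eulerian/alternating structure of $\G$) to handle the diagonal, and then obtain (ii) and (iii) from the identity $L_{v_\infty}(\G^{\a,\b})=\a V+\b V^T$. The sign bookkeeping you defer is exactly what the paper carries out using the dotted-corner formula of Burde--Zieschang, so the plan is sound and matches the published argument.
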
 

From the first part of Theorem \ref{main} we obtain:

\begin{cor} \label{signature} If $\G$ is the unshaded checkerboard graph of a special diagram for $\ell$, and $\omega$ is a unit-modulus complex number, $\omega \ne 1$, then the signature of the Hermitian matrix $L_{v_\infty} (\G^{1-\omega, 1- \overline \omega})$ is the $\omega$-signature $\sigma_\omega(\ell)$. \end{cor}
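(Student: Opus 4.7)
The plan is to deduce Corollary~\ref{signature} from Theorem~\ref{main}(i) together with a linearity property of the construction $\Gamma \mapsto \Gamma^{\alpha,\beta}$. By part (i), the matrix $V := L_{v_\infty}(\Gamma)$ is a Seifert matrix for $\ell$, and by the definition of the $\omega$-signature recalled in Section~\ref{Intro}, $\sigma_\omega(\ell)$ is the signature of the Hermitian matrix $(1-\omega)V + (1-\overline{\omega})V^T$. It therefore suffices to verify the identity
\begin{equation*}
L_{v_\infty}(\Gamma^{\alpha,\beta}) \;=\; \alpha\, V + \beta\, V^T
\end{equation*}
for arbitrary scalars $\alpha,\beta$, and then specialize to $(\alpha,\beta) = (1-\omega,\, 1-\overline{\omega})$.

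I would check the identity entrywise. For the off-diagonal entry $(i,j)$ with $i,j \ne \infty$, an edge of $\Gamma^{\alpha,\beta}$ from $v_i$ to $v_j$ is either an original edge of $\Gamma$ from $v_i$ to $v_j$ (now weighted by $\alpha w_e$) or the reversed copy $\bar{e}$ of an original edge from $v_j$ to $v_i$ (weighted by $\beta w_e$). Summing both contributions gives $L(\Gamma^{\alpha,\beta})_{ij} = \alpha L(\Gamma)_{ij} + \beta L(\Gamma)_{ji}$, which matches $(\alpha V + \beta V^T)_{ij}$. For the diagonal entry $(i,i)$, the weighted out-degree of $v_i$ in $\Gamma^{\alpha,\beta}$ is $\alpha\,d_{\text{out}}(v_i) + \beta\,d_{\text{in}}(v_i)$ (the out-edges contributing $\alpha w_e$ and the new reversed edges $\bar e$ contributing $\beta w_e$), while $(\alpha V + \beta V^T)_{ii} = (\alpha+\beta)\,d_{\text{out}}(v_i)$. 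Matching these requires the balance $d_{\text{in}}(v_i) = d_{\text{out}}(v_i)$ at each vertex $v_i \ne v_\infty$. This balance is already implicit in the Goeritz and Alexander cases of Theorem~\ref{main}; I would confirm it either by using the linking-number interpretation $V_{ii} = \mathrm{Lk}(a_i, a_i^+)$ established in the proof of part~(i), or by a local analysis of crossing signs around the unshaded region corresponding to $v_i$.

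Given the identity, specializing to $(\alpha,\beta) = (1-\omega,\, 1-\overline{\omega})$ yields
\begin{equation*}
L_{v_\infty}\bigl(\Gamma^{1-\omega,\, 1-\overline{\omega}}\bigr) \;=\; (1-\omega)V + (1-\overline{\omega})V^T.
\end{equation*}
Since $V$ is a real matrix, the conjugate transpose of $(1-\omega)V$ is $(1-\overline{\omega})V^T$, so the right-hand side is Hermitian, and its signature is $\sigma_\omega(\ell)$ by definition. The main obstacle is the diagonal check: once the balance $d_{\text{in}} = d_{\text{out}}$ at every interior vertex of $\Gamma$ is in hand, the rest of the argument is formal.
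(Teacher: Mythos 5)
Your proposal is correct and follows essentially the same route as the paper: the identity $L_{v_\infty}(\G^{\a,\b}) = \a V + \b V^T$ is exactly what the paper establishes at the end of its proof of Theorem~\ref{main} (using the fact that reversing all edges transposes $L(\G)$ together with the equality of weighted in- and out-degree at each vertex, proved there via the Eulerian structure of $\G$), and the corollary is then the specialization $(\a,\b)=(1-\omega,1-\overline\omega)$. The balance $d_{\mathrm{in}}=d_{\mathrm{out}}$ that you flag as the main obstacle is already supplied by the paper's proof of part (i), so nothing further is needed.
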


The following corollary was proven for alternating special diagrams in \cite{MS03}. The general result below follows from Theorem \ref{main} and the Matrix Tree Theorem for directed graphs using the observation that spanning trees of $\G$ correspond to spanning trees of $\G^{1, -x}$ that are directed toward $v_\infty$. (We are told by H. Russell \cite{Ru18} that the general result can also be obtained using results of \cite{CDR14}). 
\begin{cor} \label{treesum} If $\G$ is the  unshaded checkerboard graph of a special diagram for $\ell$, then the Alexander polynomial of $\ell$ is given by
\begin{equation} \De_\ell (x) = \sum_T \prod_{e \in E_T} \widetilde w_{e,T}, \end{equation}
where $T$ ranges over all spanning trees of $\G$, $E_T$ denotes the edge set of $T$, and 
\begin{equation} \widetilde w_{e,T} = \begin{cases}w_e & \text{if $e$ is directed towards $v_\infty$ in the tree $T$}\\
									-xw_e  & \text{otherwise.}
\end{cases}\end{equation}

\end{cor}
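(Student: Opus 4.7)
The plan is to combine Theorem \ref{main}(iii) with the Matrix Tree Theorem for directed, weighted graphs. Part (iii) identifies $L_{v_\infty}(\G^{1,-x})$ as an Alexander matrix for $\ell$, so $\Delta_\ell(x) = \det L_{v_\infty}(\G^{1,-x})$ up to a unit in $\Z[x,x^{-1}]$. Tutte's directed version of the Matrix Tree Theorem expresses this determinant as
$$\det L_{v_\infty}(\G^{1,-x}) = \sum_{T'} \prod_{e \in E_{T'}} w_e,$$
where $T'$ ranges over all spanning arborescences of $\G^{1,-x}$ whose edges are directed toward $v_\infty$. This is the relevant rooting convention because the Laplacian in this paper is defined using weighted out-degrees on the diagonal, and it is the $v_\infty$-th row and column that are deleted.

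Next I would establish a bijection between spanning trees $T$ of $\G$, regarded as an undirected graph for this purpose, and spanning arborescences $T'$ of $\G^{1,-x}$ oriented toward $v_\infty$. Given any spanning tree $T$ of $\G$, each non-root vertex has a unique path through $T$ to $v_\infty$; orienting every tree edge along the appropriate path produces a unique arborescence $T'$ toward $v_\infty$. Since $\G^{1,-x}$ contains both $e$ and its reverse $\bar e$ for every edge $e$ of $\G$, this orientation is always realizable, so $T \mapsto T'$ is a bijection between the two sets.

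Finally, I would match weights edge by edge. For each $e \in E_T$, if the orientation of $e$ in $\G$ already agrees with the $T$-path orientation toward $v_\infty$, the corresponding edge of $T'$ in $\G^{1,-x}$ is $e$ itself, with weight $\a w_e = w_e$; otherwise the edge of $T'$ is $\bar e$, with weight $\b w_e = -x w_e$. These are exactly the values $\widetilde w_{e,T}$ in the statement of the corollary, so substituting yields the claimed formula. The only subtle step is the directed Matrix Tree Theorem itself: one must confirm that deleting the $v_\infty$-row and column of the out-degree Laplacian counts arborescences directed \emph{into} $v_\infty$ rather than out of it; once this convention is pinned down, the rest is routine bookkeeping of edge weights.
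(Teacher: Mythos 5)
Your proposal is correct and follows essentially the same route as the paper, which likewise derives the corollary from Theorem \ref{main}(iii) together with the directed Matrix Tree Theorem and the observation that spanning trees of $\G$ correspond bijectively to spanning trees of $\G^{1,-x}$ directed toward $v_\infty$. Your additional care about the out-degree Laplacian convention (deleting the $v_\infty$ row and column counts trees oriented \emph{into} $v_\infty$) is a correct and worthwhile check that the paper leaves implicit.
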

 
 \begin{example} Figure \ref{toruslink} displays a checkerboard graph $\G$ associated to the $(2,6)$-torus link suitably oriented. Each of its 6 spanning trees is obtained by deleting a single edge of the graph. Removing edge $e_i$ results in a spanning tree $T$ with exactly $i-1$ edges directed away from $v_\infty$. By Corollary \ref{treesum} the Alexander polynomial of the link is $(-x)^0 + (-x)^1 +  (-x)^2 + (-x)^3+ (-x)^4+ (-x)^5 = 1 - x + x^2 -x^3 + x^4 - x^5$.
The example immediately generalizes for any $(2, n)$-torus link.
\end{example} 

In order to prove Theorem \ref{main} we describe a combinatorial method in \cite{BZ03} (see page 231), motivated by \cite{Mu65}, for computing the Seifert matrix $V=(V_{i,j})$ associated to the special diagram $D$ of the oriented link $\ell$. 

The shaded regions of $D$ form a Seifert surface $\Sigma$ for the link. The boundaries $\partial U_i$ of the unshaded regions, oriented in the counterclockwise sense, are a basis for $H_1(\Sigma; \Z)$ (see, for example, \cite{BZ03}). 
We can compute the linking numbers $V_{i, j} = {\rm Lk}(\partial U_i, \partial U_j^+)$ by a combinatorial procedure: 

Following each link component in the preferred direction, at each crossing $c$ we place a dot in the corner of the unshaded region that is to the left of $c$, as in Figure \ref{directed}. For each $i=1, \ldots, n$ and each crossing $c$, define: 

\begin{equation} \e_i(c) = \begin{cases}1 & \text{if $U_i$ contains a dot at $c$}\\
									0  & \text{otherwise.} 
									
									\end{cases} \end{equation}
Then \begin{equation}\label{formula}
V_{i,i} = \sum_{c \in \partial U_i} \theta(c) \e_i(c), \quad V_{i,j} = -\sum_{c \in \partial U_i \cap \partial U_j} \theta(c) \e_j(c). \end{equation} (The reader is forewarned that the minus sign in this last equation is missing in \cite{BZ03}, likely due to a typographical error.) \medskip
  \begin{figure}
\begin{center}
\includegraphics[height=2 in]{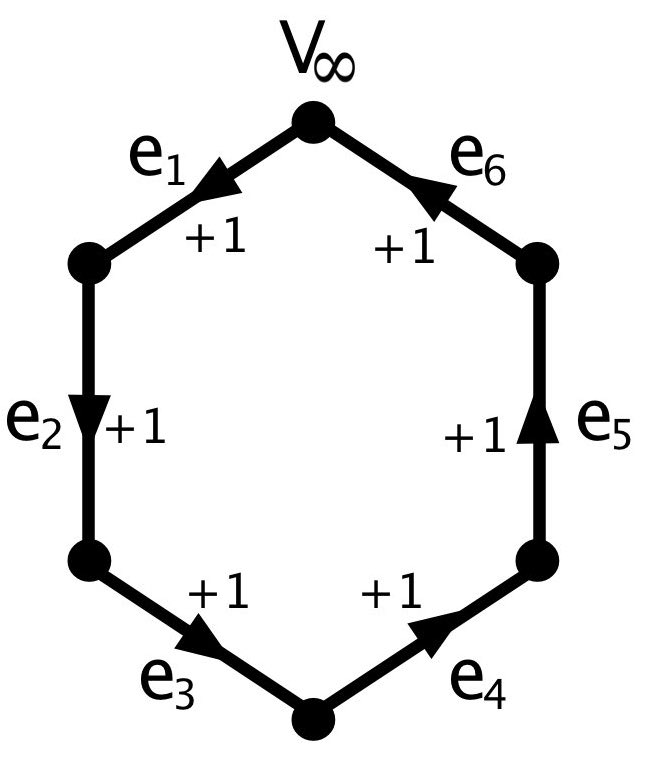}
\caption{Checkerboard graph $\G(D)$ of $(2, 6)$-torus link}
\label{toruslink}
\end{center}
\end{figure}

Consider an unshaded checkerboard graph $\G$ with edges weighted and directed as in Figure \ref{directed}.  We see easily that for $i\ne j$ the crossing on the left contributes $-1$  to both $V_{j,i}$ and $L(\G)_{j,i}$, while the crossing on the right contributes  $+1$ to both $V_{i,j}$ and $L(\G)_{i,j}$.  

We also see that the diagonal entry $V_{i,i}$ is the weighted in-degree of the vertex $v_i$ of $\G$.  Recall that $L(\G)_{i,i}$ is the weighted out-degree.  To prove statement (i) of the theorem, it remains to see that these are equal.  In the case that all indices $\theta(c)$ are $+1$,  this holds because $\G$ is a directed Eulerian graph, with in- and out-edges alternating about every vertex.  If we change the weight of an edge to $-1$ we also reverse its direction.  One easily checks that this has the same effect on the in-degree and out-degree of each of the incident vertices.

The remaining statements follow from the fact that reversing all edge directions of $\G$ has the effect of transposing $L(\G)$. This is clear for non-diagonal entries; for diagonal terms we use again the observation that weighted in-degree and out-degree are equal. Consequently, $L_{v_\infty}(\G^{\a, \b})=\a V+ \b V^T$, for any $\a, \b$. 
In particular, $L_{v_\infty}(\G^{1, 1})= V+V^T$ is a Goeritz matrix for $\ell$. Similarly, $L_{v_\infty}(\G^{1, -x})= V-x V^T$ is an Alexander matrix for $\ell$.

\begin{example} Consider the special diagram of the knot $k = 5_2$ in Figure \ref{ex1}. The index $\theta(c)$ is $-1$ for every crossing $c$. The associated directed graph $\G$ appears on the right with ordered vertex set $\{v_1, v_2, v_\infty\}$. Its Laplacian matrix is 
$$L(\G) = \begin{pmatrix} -1 & 0 & 1 \\ 1 & -2 & 1 \\ 0 & 2 & -2 \end{pmatrix}.$$
The reader can verify that the principal $2 \times 2$ submatrix ]
$$L_{v_\infty}(\G)=\begin{pmatrix} -1 & 0 \\ 1 & -2  \end{pmatrix}$$
is a Seifert matrix $V$ for the knot, while
$$L_{v_\infty}(\G^{1, -x})= \begin{pmatrix} -1+x & -x \\ 1 & -2+2x  \end{pmatrix}$$
is an Alexander matrix. 

\begin{figure}
\begin{center}
\includegraphics[height=2 in]{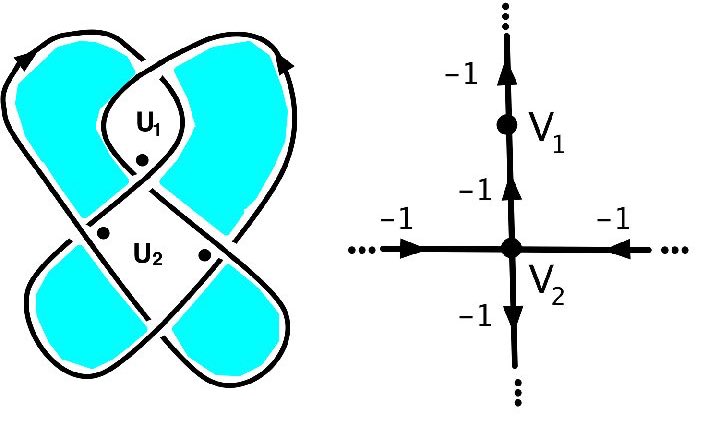} 

\caption{Special diagram for $k= 5_2$ (left) and checkerboard graph $\G$. The vertex $v_\infty$ is not shown.}
\label{ex1}
\end{center}
\end{figure}

\end{example}

\section{Kauffman's tracer circuits} \label{tracer}  We can describe an arbitrary diagram $D$ of an oriented link $\ell$ schematically with oriented circles that represent the Seifert circles of $D$; they are joined by short arcs representing the half-twisted bands that join the circles. The arcs are labeled $+1$ or $-1$ according to Figure \ref{newdirected}. An example appears in Figure \ref{ex2}. We will call such a diagram a \textit{Seifert circle diagram} of $\ell$. 

\begin{figure}
\begin{center}
\includegraphics[height =1in]{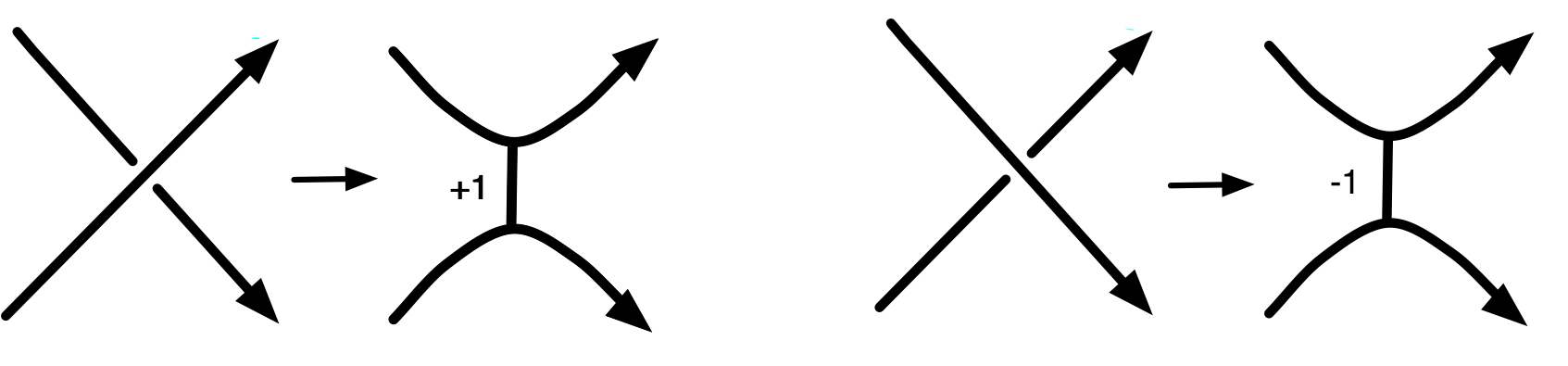}
\caption{Creating Seifert surface diagram.}
\label{newdirected}
\end{center}
\end{figure}

\begin{figure}
\begin{center}
\includegraphics[height =1.5 in]{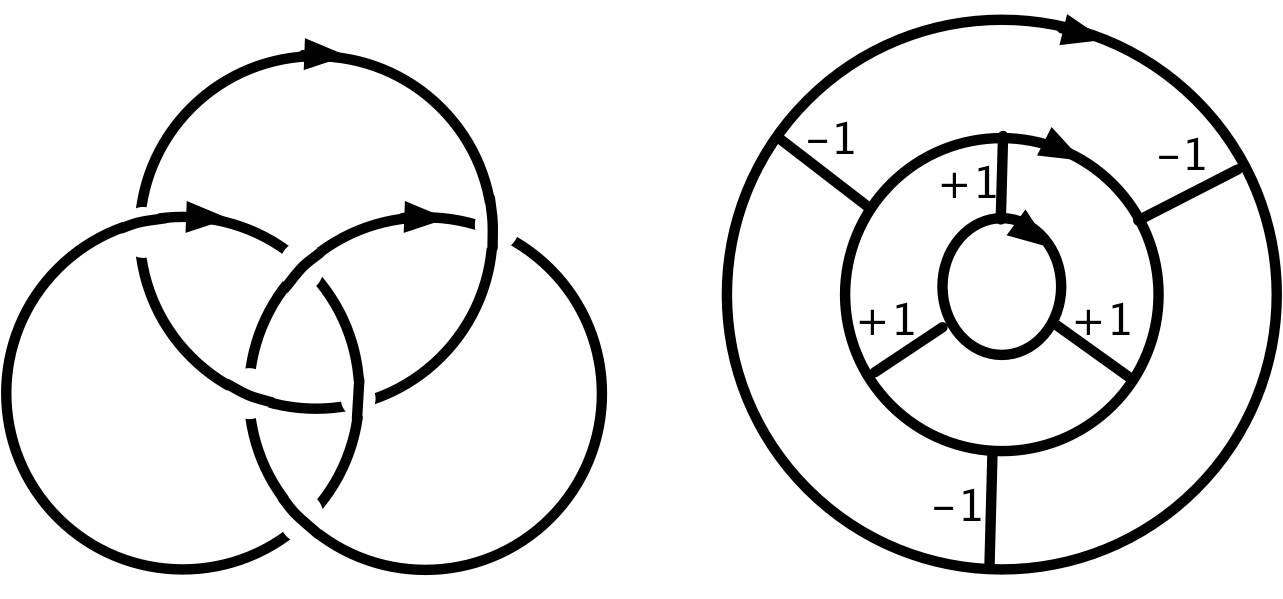}
\caption{Non-special diagram of Borromean rings (left) and Seifert circle diagram (right).}
\label{ex2}
\end{center}
\end{figure}

The Seifert circles $\s$ bound disks $\d$, not necessarily disjoint, in the plane. If the diagram $D$ is not special then at least one disk contains another in its interior. Consider such a disk $\d$, with $\partial \d = \s$. 

We add an unknotted component to $\ell$, drawn in $D$ as an unknotted circle that follows a parallel copy of $\s$, staying in $\d$ except where it detours around a crossing to an interior Seifert circle $\s'$,  always underneath the other arcs of $D$. Orient this circle in the opposite direction of $\s$.  The new Seifert circle diagram can be gotten from the original by the following procedure. 
Let $p_1, \ldots, p_r$ be the intersection points of $\s$ and arcs joining $\s$ with Seifert circles in its interior. Add points $q_1, \ldots, q_r$ along $\s$ interspersed with $p_1, \ldots, p_r$. ``Blow up" each point $p_i, q_i$ to a small circle; that is, replace a small neighborhood  in $\s$ of each point $p_i, q_i$ with a circle $\s_i, \s'_i$ respectively. The circles are joined to each other by the remaining segments of $\s$. Give $\s_i$ (resp. $\s_i'$) the opposite (resp. same) orientation of $\s$. The arc to the left of each $\s_i$ receives weight $+1$, while the arc to the right receives $-1$.  The arc that originally connected $p_i$ to an interior circle now attaches to $\s_i$.  Any arc that originally connected a segment of $\s$ between $p_i$ and $p_{i+1}$ to a Seifert circle exterior to $\s$ now attaches to the circle $\s_i'$. The procedure is illustrated in Figure \ref{blowup}. (The sign of the crossing of the arcs of $D$ can be reversed without affecting the right-hand side of the figure.) 

\begin{figure}
\begin{center}
\includegraphics[height =1.5 in]{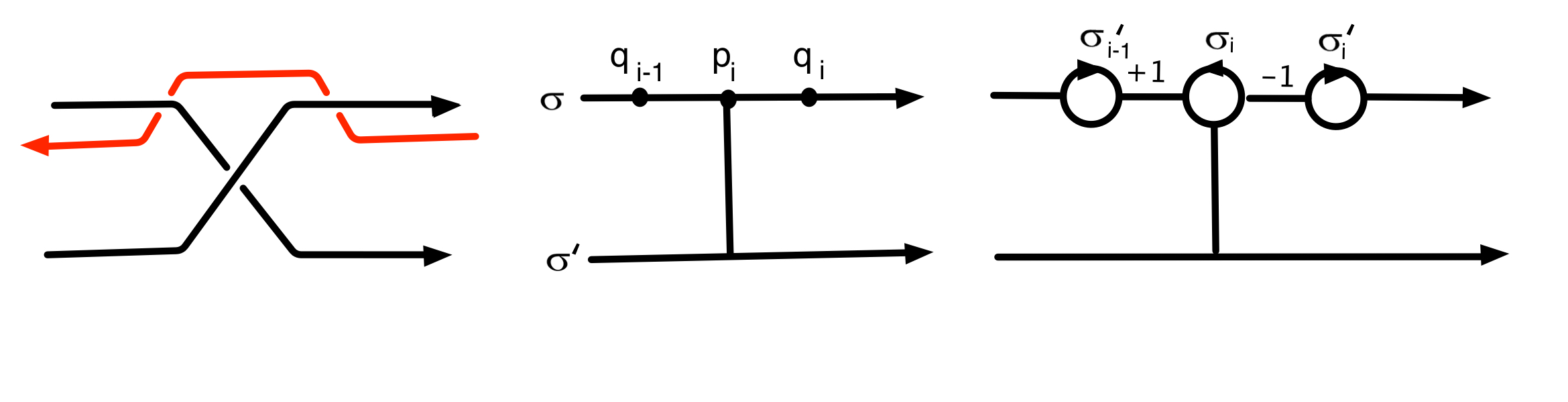}
\caption{Detail of link diagram and tracer circle (left), detail of Seifert circle diagram (center), points $q_{i-1}, p_i, q_i$ blown up (right).}
\label{blowup}
\end{center}
\end{figure}

The procedure can be repeated as often as necessary so that the final diagram $D^\sharp$ is special. We can obtain the unshaded checkerboard graph of $D^\sharp$ from its  Seifert circle diagram  by placing a vertex in each non-circle region and joining pairs of vertices by an edge whenever the corresponding regions are separated by an arc. If the separating arc is weighted $+1$ (resp. $-1$) then the edge is weighted $-1$ (resp. $+1$). The direction of the edge follows the direction of the two circles on either side if its weight is $+1$; otherwise it follows the opposite direction. 

\begin{example}\label{eight} Viewing Figure \ref{eightfig} from the top, we see a diagram $D$ of the oriented figure-eight knot $k=4_1$, its Seifert circle diagram, and the new Seifert circle diagram of $D^\sharp$. The unshaded checkerboard graph $\G(D^\sharp)$, obtained here directly from $D^\sharp$, is displayed 
in the second row of the diagram. The edges leading away from $v_2$ and $v_4$ go to the vertex $v_\infty$. 

The Laplacian matrix of $\G(D^\sharp)$ is
$$\begin{pmatrix} -1 & 1 & 1 &-1 & 0\\ 
0 & 1 & 0 & -1 & 0 \\ 
1&-1 & -1 & 1 & 0\\
0 & -1 & 0 & 1 & 0 \\
0 & 0 & 0 & 0 & 0 \end{pmatrix}.$$
The principal $2 \times 2$ submatrix 
$$V=\begin{pmatrix}  
-1 & 1  \\ 
0&1  \end{pmatrix}$$
obtained by deleting rows and columns corresponding to $v_3, v_4$ and $v_\infty$ is a Seifert matrix for the link formed by $k$ and two unknotted, unlinked components. The Alexander matrix $V - x V^T$ for the link arises in a similar way from the doubled graph $\widetilde \G(D^\sharp)$. This is an instance of a general result, Theorem \ref{tracer}. 
\end{example} 

\begin{figure}
\begin{center}
\includegraphics[height =3in]{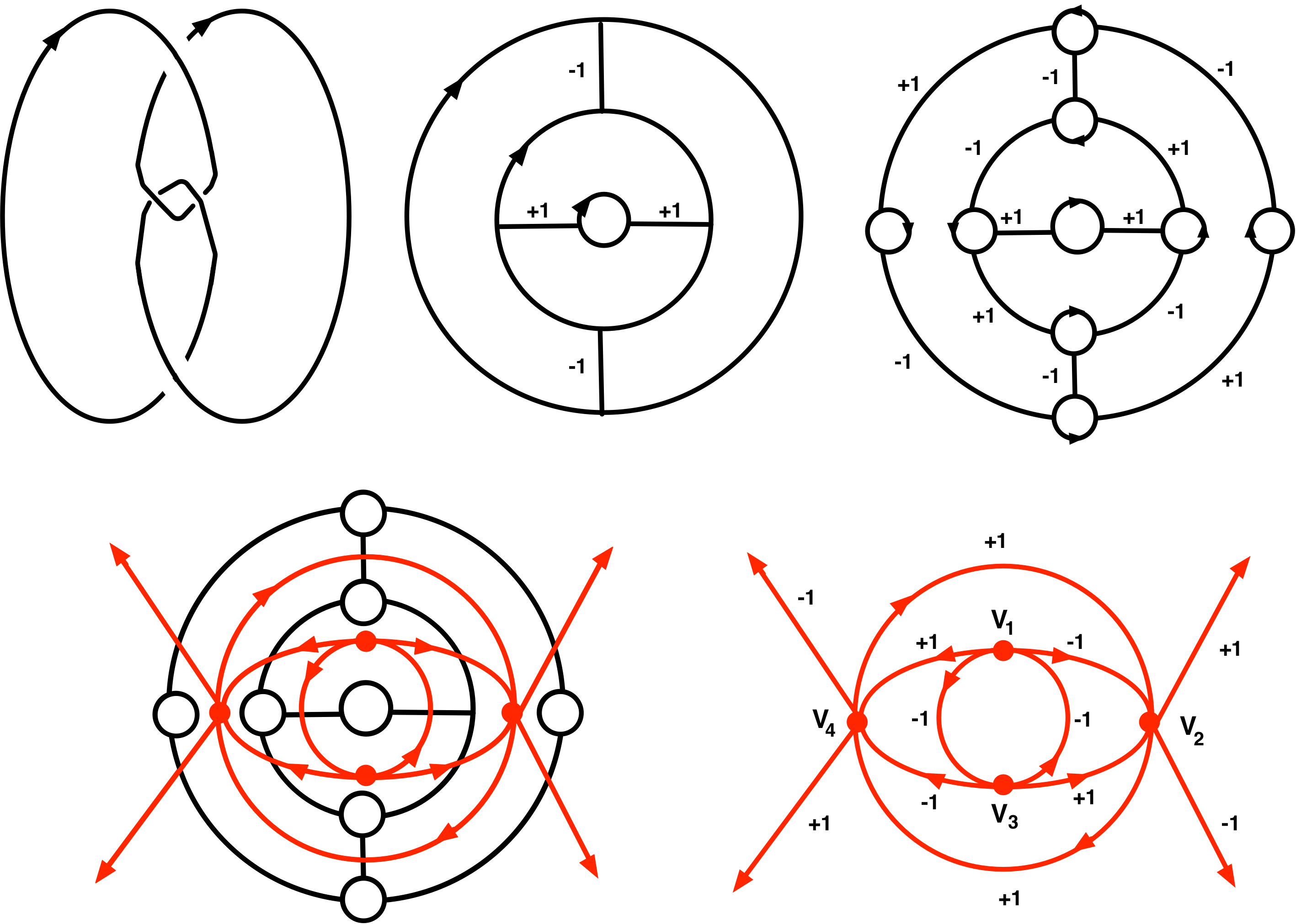}
\caption{Diagram $D$ of oriented figure-eight knot (top, left); associated Seifert circle diagram (top, center); Seifert circle diagram for $D^\sharp$ (top, right); unshaded checkerboard graph $\G(D^\sharp)$ (red, bottom, left); $\G(D^\sharp)$ with edge weights and labeled vertices.}
\label{eightfig}
\end{center}
\end{figure}

For $W$ a subset of the vertex set of $\G(D^\sharp)$, we denote by 
$L_W(\G(D^\sharp))$ the principle submatrix of the Laplacian matrix $L(\G(D^\sharp))$ obtained by deleting the rows and columns corresponding to elements of $W$. We define $L_W(\G^{\a, \b}(D^\sharp))$ similarly. 
A spanning forest $F$ of $\G(D^\sharp)$ is \textit{rooted at $W$} if each connected component of $F$ contains a unique element of $W$. 

\begin{theorem} \label{tracer} Assume that $D$ is a diagram of an oriented link $\ell$ and $D^\sharp$ is a special diagram obtained by adding $\tau$ tracer circles. Let $W = \{v_{i_1}, \ldots, v_{i_\tau}, v_\infty\}$, where, for each $j = 1, \ldots, \tau$, $v_{i_j}$ is a vertex of $\G(D^\sharp)$ corresponding to a region of $D^\sharp$ that meets the $j$th tracer circle. Then
the conclusions of Theorem \ref{main} and Corollary \ref{signature} hold with $L_{v_\infty}$ replaced with $L_W$. 
%
%
\end{theorem}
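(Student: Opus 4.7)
The plan is to reduce Theorem \ref{tracer} to Theorem \ref{main} applied to $D^\sharp$. Since the weighted in- and out-degrees agree at every vertex of $\G(D^\sharp)$ (the same argument that appeared in the proof of Theorem \ref{main}), one has $L(\G^{\alpha,\beta}(D^\sharp)) = \alpha L(\G(D^\sharp)) + \beta L(\G(D^\sharp))^T$, and this identity survives deletion of any common set of rows and columns from both sides. Hence, once it is verified that $L_W(\G(D^\sharp))$ is a Seifert matrix $V$ for $\ell$, the remaining conclusions follow by specializing $(\alpha,\beta)$ to $(1,1)$, $(1,-x)$, and $(1-\omega, 1-\overline{\omega})$, exactly as in Theorem \ref{main} and Corollary \ref{signature}. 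So the task reduces to the Seifert statement.

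Applying Theorem \ref{main} directly to $D^\sharp$ yields that $V^\sharp := L_{v_\infty}(\G(D^\sharp))$ is a Seifert matrix for the augmented link $\ell^\sharp = \ell \cup T_1 \cup \cdots \cup T_\tau$, with respect to the Seifert surface $\Sigma^\sharp$ formed by the shaded regions of $D^\sharp$ and to the basis $\{[\partial U_i]\}$ of $H_1(\Sigma^\sharp;\Z)$. Two structural claims then drive the argument. First, for each $j$, the homology class $[T_j] \in H_1(\Sigma^\sharp;\Z)$ is a signed sum $\sum_{i \in S_j} \varepsilon_{j,i}[\partial U_i]$, where $S_j$ indexes the unshaded regions meeting $T_j$ and $\varepsilon_{j,i} \in \{\pm 1\}$; this is a local computation in the blowup construction (Figure \ref{blowup}), in which the boundaries of the regions adjacent to $T_j$ collectively recover $T_j$ while the remaining arcs telescope. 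Second, $V^\sharp [T_j] = 0 = [T_j]^T V^\sharp$: by construction, $T_j$ is unknotted and unlinked from the other components of $\ell^\sharp$, so it bounds a disk $D_j$ in $S^3 \setminus (\ell \cup \bigcup_{k \ne j} T_k)$ that may be pushed, near $T_j$, onto the positive-normal side of $\Sigma^\sharp$ so as to meet $\Sigma^\sharp$ only along $T_j$. The disk $D_j$ then certifies $\mathrm{Lk}(T_j, a^+) = 0 = \mathrm{Lk}(a, T_j^+)$ for every interior 1-cycle $a$ on $\Sigma^\sharp$.

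Given both claims, fix an index $i_j \in S_j$ for each $j$. Since $\varepsilon_{j,i_j} = \pm 1$, replacing $[\partial U_{i_j}]$ by $[T_j]$ and leaving every other $[\partial U_i]$ fixed is a unimodular change of basis of $H_1(\Sigma^\sharp;\Z)$. In this new basis, the null-linking claim makes the rows and columns indexed by $[T_1],\ldots,[T_\tau]$ vanish, so $V^\sharp$ acquires the block form
\[
\begin{pmatrix} V & 0 \\ 0 & 0 \end{pmatrix}.
\]
Because the change of basis left $[\partial U_i]$ untouched for $i \ne i_1,\ldots,i_\tau$, the block $V$ agrees entry by entry with the submatrix of $V^\sharp$ indexed by those $i$'s, which is precisely $L_W(\G(D^\sharp))$. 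Finally, $V$ is a Seifert matrix for $\ell$ relative to $\Sigma' := \Sigma^\sharp \cup D_1 \cup \cdots \cup D_\tau$, a Seifert surface for $\ell$: capping the tracer boundaries with the disks $D_j$ kills precisely the classes $[T_j]$, leaves $\{[\partial U_i]\}_{i \ne i_1,\ldots,i_\tau}$ as a basis of $H_1(\Sigma';\Z)$, and preserves linking numbers since these are topological invariants in $S^3$.

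The main obstacle is establishing the two structural claims. The homological identity demands a careful traversal of the local pictures in Figure \ref{blowup} and the surrounding Seifert circles in order to pin down the $\pm 1$ coefficients, and the null-linking identity requires choosing the bounding disk $D_j$ in the appropriate complementary region of $S^3 \setminus \Sigma^\sharp$, using the facts that $T_j$ is unknotted in $S^3$, is unlinked from $\ell \cup \bigcup_{k\ne j}T_k$, and is a boundary component of $\Sigma^\sharp$ rather than an interior curve.
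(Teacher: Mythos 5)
Your proposal is correct, and its backbone coincides with the paper's: both arguments reduce to the observation that discarding one unshaded region per tracer circle leaves a family of boundary curves forming a basis of the first homology of a Seifert surface for $\ell$ itself, on which the Seifert pairing is given by exactly the corresponding entries of $L_{v_\infty}(\G(D^\sharp))$. Your final step (capping the tracer boundary components with disks $D_j$ to form $\Sigma'$) is the paper's surface $\Sigma$ in different clothing, since the paper obtains $\Sigma^\sharp$ from $\Sigma$ by puncturing the nested Seifert disks, and your reduction of parts (ii), (iii) and the signature statement to part (i) via $L_W(\G^{\a,\b}(D^\sharp))=\a L_W+\b L_W^T$ is the paper's argument verbatim. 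The genuinely new ingredient in your write-up is the intermediate congruence of $V^\sharp$ to the block sum of $V$ with a zero matrix, obtained from the null-linking of the tracer classes $[T_j]$; the paper never needs this, because once the retained curves are known to be a basis of $H_1(\Sigma;\Z)$ the principal submatrix $L_W$ is a Seifert matrix for $\ell$ with no change of basis at all. Your detour buys a slightly stronger conclusion (an explicit congruence relating the Seifert matrices of $\ell^\sharp$ and $\ell$) at the cost of the two structural claims you flag. Two small cautions there: first, for ${\rm Lk}(a, T_j^+)=0$ you should not count on pushing $D_j$ to the positive-normal side --- the tracer circle runs under the other arcs, so its disk must lie below the projection plane, which may be the negative side; it is cleaner to note that a parallel copy of the boundary component $T_j$ separates $\Sigma^\sharp$, so ${\rm Lk}(a,T_j^+)-{\rm Lk}(T_j,a^+)=a\cdot T_j=0$ and the easy vanishing of ${\rm Lk}(T_j,a^+)$ gives the other. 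Second, unimodularity of your change of basis requires $\det(\varepsilon_{j,i_k})_{j,k}=\pm1$, not merely $\varepsilon_{j,i_j}=\pm1$, in case a chosen region meets more than one tracer circle; the paper sidesteps this by discarding regions one tracer circle at a time, working from outermost nesting disks inward.
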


\begin{proof} The Seifert surface $\Sigma^\sharp$ corresponding to $D^\sharp$ can be obtained from $\Sigma$ by puncturing each nesting Seifert disk. The homology group $H_1(\Sigma^\sharp; \Z) \cong H_1(\Sigma; \Z) \oplus \Z^\tau$ is is free with basis $\cal B$ represented by the boundaries of the unshaded regions of $D^\sharp$. 

Consider an outermost nesting disk $\d$ and associated tracer circle $\s$ in $D^\sharp$. Let $U_{\d, 1}, \ldots, U_{\d, s}$ be the bounded unshaded regions that meet $\s$. The sum $\partial U_{\d, 1} + \cdots + \partial U_{\d, s}$ is null-homologous in $\Sigma$. We recover a basis for $H_1(\Sigma; \Z)$ from $\cal B$ by discarding any one of $\partial U_{\d, 1}, \ldots, \partial U_{\d, s}$, and repeating for each tracer circle. The desired results follow as in the proofs of Theorem \ref{main} (ii) and Corollary \ref{signature}.
\end{proof} 

Applying the All Minors Matrix Tree Theorem \cite{Ch82} to Theorem \ref{tracer} we obtain the following.

\begin {cor}
 The Alexander polynomial of $\ell$ is given by
\begin{equation} \De_\ell (x) = \sum_F \prod_{e \in E_F} \widetilde w_{e,F} \end{equation}
where $F$ ranges over all spanning forests of $\G^\sharp$ rooted at $W$, and 
\begin{equation} \widetilde w_{e,F} = \begin{cases}w_e & \text{if $e$ is directed towards $W$}\\
									-xw_e  & \text{otherwise.}
\end{cases}\end{equation}
\end{cor}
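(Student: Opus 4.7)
The plan is to apply the All Minors Matrix Tree Theorem of Chaiken~\cite{Ch82} directly to the matrix
$$L_W\bigl(\G^{1,-x}(D^\sharp)\bigr),$$
which by Theorem~\ref{tracer} is an Alexander matrix for $\ell$, so that its determinant equals $\De_\ell(x)$ up to the usual ambiguity of $\pm x^i$. Chaiken's theorem states that the principal minor of a directed Laplacian obtained by deleting the rows and columns indexed by a vertex set $W$ equals the sum, over all spanning \emph{in-forests} of the digraph rooted at $W$ (i.e.\ forests with $|W|$ components, each containing a single root in $W$ and all edges directed toward that root), of the products of edge weights.

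The core step is to translate this expansion from the doubled, weighted digraph $\G^{1,-x}(D^\sharp)$ back to the original graph $\G^\sharp = \G(D^\sharp)$. Each edge $e$ of $\G^\sharp$ gives rise in $\G^{1,-x}$ to two parallel oppositely-directed edges: the original with weight $w_e$ and its reverse $\bar e$ with weight $-x w_e$. In any spanning in-forest $\widetilde F$ of $\G^{1,-x}$ we cannot select both $e$ and $\bar e$, since they would form a $2$-cycle and therefore cannot both point toward a common root. Thus $\widetilde F$ projects to a spanning forest $F$ of $\G^\sharp$, and for each $e\in E_F$ exactly one of the two directed copies is chosen. The one chosen is precisely the copy pointing toward $W$ along the unique path in $F$ from $e$ to its root. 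This gives a bijection between spanning in-forests of $\G^{1,-x}$ rooted at $W$ and pairs (spanning forest $F$ of $\G^\sharp$ rooted at $W$, canonical orientation of $F$ toward the roots).

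Under this bijection the weight of $\widetilde F$ is the product over $e\in E_F$ of $w_e$ when the root-directed orientation agrees with the original direction of $e$ in $\G^\sharp$, and $-x\,w_e$ otherwise. This is exactly $\widetilde w_{e,F}$ as defined in the statement, so summing yields
\[
\De_\ell(x) \;=\; \det L_W\bigl(\G^{1,-x}(D^\sharp)\bigr) \;=\; \sum_F \prod_{e\in E_F} \widetilde w_{e,F}.
\]

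The main obstacle is bookkeeping rather than conceptual: one must verify carefully that the orientation used by Chaiken's theorem (edges toward roots) matches the phrasing ``$e$ is directed towards $W$'' in the corollary, and that the weight assignment inherited from the construction $\G^{\alpha,\beta}$ with $(\alpha,\beta) = (1,-x)$ produces the stated piecewise rule. The only subtlety is the impossibility of simultaneously using an edge and its reverse, which, as noted, is automatic from the in-forest condition.
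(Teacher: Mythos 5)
Your proposal is correct and follows essentially the same route as the paper, which derives the corollary by applying Chaiken's All Minors Matrix Tree Theorem to the matrix $L_W(\G^{1,-x}(D^\sharp))$ furnished by Theorem~\ref{tracer}, using the correspondence between spanning forests of $\G^\sharp$ rooted at $W$ and in-forests of the doubled graph directed toward $W$. The paper states this in a single sentence; your write-up supplies the bijection and weight bookkeeping that it leaves implicit, and does so correctly.
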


%
%
%

\bigskip

\ni Department of Mathematics and Statistics,\\
\ni University of South Alabama\\ Mobile, AL 36688 USA\\
\ni Email: silver@southalabama.edu, swilliam@southalabama.edu
\end{document}